\newtheorem{theorem}{Theorem}[section]
\newtheorem{proposition}[theorem]{Proposition}
\newtheorem{lemma}[theorem]{Lemma}
\theoremstyle{definition}
\begin{document}
\title{On sizes of 1-cross intersecting set pair systems}
\author{
Alexandr V. Kostochka\thanks{University of Illinois at Urbana--Champaign,
Urbana IL, and Sobolev Institute of Mathematics, Novosibirsk,
Russia: \texttt{kostochk@math.uiuc.edu}.  Research supported in part by NSF
grant DMS-1600592, NSF RTG Grant DMS-1937241 and grants 18-01-00353A and 19-01-00682  of the Russian
Foundation for Basic Research.}\,,
Grace McCourt\thanks{University of Illinois at Urbana--Champaign, Urbana IL: \texttt{mccourt4@illinois.edu}.
Research supported in part by NSF RTG Grant DMS-1937241.}\,,
Mina Nahvi\thanks{University of Illinois at Urbana--Champaign,
Urbana IL: \texttt{mnahvi2@illinois.edu}.
Research supported in part by Arnold O. Beckman Campus Research Board Award RB20003 of the University of Illinois at Urbana-Champaign.
}
}

\date{\today}
\maketitle

\baselineskip 16pt
\begin{abstract}
Let $\{(A_i,B_i)\}_{i=1}^m$ be a set pair system. F\"{u}redi, Gy\'{a}rf\'{a}s and Kir\'{a}ly called it {\em $1$-cross intersecting} if $|A_i\cap B_j|$ is $1$ when $i\neq j$ and $0$ if $i=j$. They studied such systems and their generalizations, and in particular considered $m(a,b,1)$ --- the maximum size of a $1$-cross intersecting set pair system in which  $|A_i|\leq a$ and $|B_i|\leq b$ for all $i$. F\"{u}redi, Gy\'{a}rf\'{a}s and Kir\'{a}ly  proved that $m(n,n,1)\geq 5^{(n-1)/2}$ and asked whether there are upper bounds on $m(n,n,1)$ significantly better than the classical bound ${2n\choose n}$ of Bollob\' as for 
cross intersecting set pair systems.

Answering one of their questions, Holzman recently proved that if $a,b\geq 2$, then 
$m(a,b,1)\leq \frac{29}{30}\binom{a+b}{a}$. He also conjectured that the factor $\frac{29}{30}$ in his bound can be replaced by
$\frac{5}{6}$. The goal of this paper is to prove this bound.

\end{abstract}
\section{Introduction}
Let $\{(A_i,B_i)\}_{i=1}^m$ be a family of $m\geq2$ pairs of finite sets. This system is \textit{cross intersecting} if 
\begin{center}
    $A_i\cap B_i=\emptyset$ and $A_i\cap B_j\not=\emptyset$ for all distinct $i,j\in [ m]$.
\end{center} 
This notion introduced by Bollob\'{a}s~\cite{Bol} turned out to be a quite useful concept in extremal combinatorics.
 The reader can find interesting results on cross intersecting set pair systems and their applications  in 
 surveys~\cite{Furedi},\cite{Tuza1} and \cite{Tuza2}.
 
The classical result of Bollob\'{a}s~\cite{Bol} on the sizes of such systems is as follows.

\begin{theorem}[\cite{Bol}]\label{tbb} Let $\{(A_i,B_i)\}_{i=1}^m$ be a cross intersecting set pair system with $|A_i|\leq a_i$ and $|B_i|\leq b_i$ for every $1\leq i\leq m$. Then
$$ \sum_{i=1}^m \frac{1}{\binom{a_i+b_i}{a_i}}\leq1,$$
and equality holds only if there exist $a$, $b$ and an $(a+b)$-element set $M$ such that for every $i$, $A_i$ is an $a$-element subset of $M$ and $B_i=M-A_i$.

In particular, if $\{(A_i,B_i)\}_{i=1}^m$ is a cross intersecting set pair system with $|A_i|\leq a$ and $|B_i|\leq b$ for every $1\leq i\leq m$,  then
$$m \leq {\binom{a+b}{a}}.$$
\end{theorem}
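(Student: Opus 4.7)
The plan is to prove the inequality via the classical random-permutation (LYM-style) argument of Bollob\'as, and then derive the uniform corollary by specialization. Set $V = \bigcup_{i=1}^m (A_i \cup B_i)$ and let $\pi$ be a uniformly random linear ordering of $V$. For each $i\in [m]$, let $E_i$ be the event that in $\pi$ every element of $A_i$ precedes every element of $B_i$. Since $A_i \cap B_i=\emptyset$, the induced ordering of $\pi$ on $A_i\cup B_i$ is uniform, and combined with monotonicity of binomial coefficients in both arguments this yields
\[
\Pr[E_i]=\frac{1}{\binom{|A_i|+|B_i|}{|A_i|}}\geq \frac{1}{\binom{a_i+b_i}{a_i}}.
\]

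The core step is to show that the events $E_i$ are pairwise disjoint. Suppose $E_i$ and $E_j$ both hold for some $i\ne j$. The cross-intersecting property provides $x\in A_i\cap B_j$ and $y\in A_j\cap B_i$. Then $E_i$ forces $x$ (as an element of $A_i$) to precede $y$ (as an element of $B_i$), while $E_j$ forces $y$ (as an element of $A_j$) to precede $x$ (as an element of $B_j$), a contradiction. Therefore $\sum_i \Pr[E_i]\le 1$, which yields $\sum_{i=1}^m 1/\binom{a_i+b_i}{a_i}\le 1$. The uniform statement $m\le \binom{a+b}{a}$ is immediate by setting $a_i=a$ and $b_i=b$ for every $i$.

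For the equality characterization, assume $\sum_i 1/\binom{a_i+b_i}{a_i}=1$. Then necessarily $|A_i|=a_i$ and $|B_i|=b_i$ for every $i$, and the events $E_i$ must in fact partition the set of all linear orderings of $V$. The main obstacle is to extract from this rigid partition property the claimed structure: that all $A_i\cup B_i$ coincide with a common $(a+b)$-element set $M$ and that $B_i=M\setminus A_i$. I would approach this by an exchange argument on orderings of $V$: if some $v\in V$ belongs to $A_j\cup B_j$ but not to $A_i\cup B_i$, then beginning with an ordering witnessing $E_i$ and sliding $v$ through $V$ leaves the ordering in $E_i$ while freely changing $v$'s position relative to $A_j$ and $B_j$, which can be arranged to produce an ordering witnessing a second event, violating the partition. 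Once a common ground set $M$ is established, $A_i\cap B_i=\emptyset$ together with $|A_i|+|B_i|=|M|$ forces $B_i=M\setminus A_i$ and makes $a_i+b_i$ constant, as claimed.
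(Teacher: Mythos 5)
The paper does not prove this theorem; it is quoted from Bollob\'as and used as a black box, so there is no in-paper proof to match yours against. The closest relative in the paper is Lemma~\ref{avg}, which packages Bollob\'as's idea as an averaging identity over vertex deletions (restrict to the pairs with $v\notin A_i$, delete $v$, recurse); your random-permutation argument is the other classical avatar of the same idea. Your proof of the inequality itself is correct and complete: the uniformity of the induced order on $A_i\cup B_i$ gives $\Pr[E_i]=1/\binom{|A_i|+|B_i|}{|A_i|}$, the monotonicity of $\binom{a+b}{a}$ in each argument handles $|A_i|\leq a_i$, $|B_i|\leq b_i$, and the disjointness argument is right (note $x\neq y$ precisely because $A_i\cap B_i=\emptyset$, which is worth saying). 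The uniform corollary follows as you state. This is all the paper ever uses (together with the averaging identity of Lemma~\ref{avg}); the equality clause is never invoked in the paper's arguments.

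That said, your sketch of the equality characterization contains a genuine logical flaw. You propose to reach a contradiction by ``producing an ordering witnessing a second event,'' i.e.\ an ordering lying in both $E_i$ and $E_j$. But you have already shown that the events $E_i$ are pairwise disjoint for \emph{every} cross-intersecting system, so no such ordering can ever be produced; the exchange move cannot manufacture one. What equality actually buys you, beyond disjointness, is that the $E_i$ \emph{cover} all orderings of $V$, so the contradiction you must exhibit is an ordering lying in \emph{no} $E_i$. Moreover, sliding a single element $v$ cannot in general force membership in $E_j$, since the relative order of $A_j\setminus\{v\}$ and $B_j\setminus\{v\}$ is untouched by the move. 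A cleaner route to the structure result is the recursive form of Lemma~\ref{avg}: equality forces $\Sigma(\mathcal{S}[I^A_{\bar v}]-\{v\})=1$ for every $v\in V(\mathcal{S})$, and induction on $|V(\mathcal{S})|$ then yields the common ground set $M$ and $B_i=M\setminus A_i$. Finally, the preliminary claim that equality forces $|A_i|=a_i$ and $|B_i|=b_i$ needs a word of justification (strict monotonicity of $\binom{a+b}{a}$ in each argument when the other is positive). Since these points concern only the equality clause, the part of the theorem the paper actually relies on is fully established by your argument.
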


F\"{u}redi, Gy\'{a}rf\'{a}s and Kir\'{a}ly~\cite{FGK} have introduced a more restricted class of set pair systems. 
They call a set pair system \textit{$1$-cross intersecting} if it is cross intersecting and $|A_i\cap B_j|=1$ 
for all distinct $i,j\in [ m]$.
 A set pair system $\{(A_i,B_i)\}_{i=1}^m$ is $(a,b)$-\textit{bounded} if for all $i$ we have $|A_i|\leq a$ and $|B_i|\leq b$. 
F\"{u}redi, Gy\'{a}rf\'{a}s and Kir\'{a}ly~\cite{FGK} studied $1$-cross intersecting set pair systems and some variations of them. 
They also pointed out connections of these problems with problems on  edge partitions of special bipartite graphs into complete bipartite subgraphs.

In particular, they considered how large  such systems can be if the sizes of the sets in the systems are bounded.
Let $m(a,b,1)$ denote  the maximum size of a $1$-cross intersecting set pair system in which  $|A_i|\leq a$ and $|B_i|\leq b$ for all $i$. F\"{u}redi, Gy\'{a}rf\'{a}s and Kir\'{a}ly~\cite{FGK} proved that $m(n,n,1)$ is at least exponential in $n$:

\begin{proposition}[\cite{FGK}] \label{FGK-prop1} If $n$ is even, then $m(n,n,1)\geq 5^{n/2}$, and if $n$ is odd, then $m(n,n,1)\geq 2\cdot 5^{(n-1)/2}$.
\end{proposition}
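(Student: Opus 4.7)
The plan is to give an explicit recursive construction that iterates a ``blow-up product'' against two small base cases. Concretely, I would prove two ingredients: (i) small 1-cross intersecting systems for parameters $(1,1)$ and $(2,2)$; and (ii) a lemma saying that $m(a+2,b+2,1)\geq 5\,m(a,b,1)$. Iterating (ii) from the appropriate base case in (i) then yields the two bounds.

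For the base cases, set $A_1=\{1\}$, $B_1=\{2\}$, $A_2=\{2\}$, $B_2=\{1\}$, proving $m(1,1,1)\geq 2$. For the $(2,2)$ case, on ground set $\mathbb{Z}/5$ define $A_i=\{i,i+1\}$ and $B_i=\{i+2,i+4\}$ for $i\in\mathbb{Z}/5$; a one-line cyclic computation shows $A_i\cap B_i=\varnothing$ and $|A_i\cap B_j|=1$ for $i\neq j$, so $m(2,2,1)\geq 5$.

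For the recursive step the naive Cartesian product (take unions of pairs from two systems on disjoint ground sets) fails: for pairs differing in both coordinates, the intersection of $A$ and $B$ has size $2$, not $1$. The fix is to use \emph{five disjoint copies} of the smaller system, one per ``direction'' of the $(2,2)$-system. Let $\{(X_j,Y_j)\}_{j=1}^{p}$ realize $m(a,b,1)=p$ on a ground set $W$; take disjoint copies $(X_j^{(i)},Y_j^{(i)})$ on disjoint sets $W^{(1)},\ldots,W^{(5)}$, and the base system $\{(A_i^0,B_i^0)\}_{i=1}^{5}$ on a ground set disjoint from all $W^{(i)}$. For $(i,j)\in[5]\times[p]$ define
\[
A_{ij}:=A_i^0\cup X_j^{(i)},\qquad B_{ij}:=B_i^0\cup Y_j^{(i)}.
\]
The size bounds $|A_{ij}|\leq a+2$ and $|B_{ij}|\leq b+2$ are immediate. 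For $(i,j)\neq(i',j')$, the cross-terms $A_i^0\cap Y_{j'}^{(i')}$ and $X_j^{(i)}\cap B_{i'}^0$ vanish by the disjointness, so
\[
A_{ij}\cap B_{i'j'}=\bigl(A_i^0\cap B_{i'}^0\bigr)\cup\bigl(X_j^{(i)}\cap Y_{j'}^{(i')}\bigr).
\]
If $i=i'$ (hence $j\neq j'$), the first piece is empty and the second is a singleton by the 1-cross property of $\{(X_j,Y_j)\}_j$. If $i\neq i'$, the first piece is a singleton by the $(2,2)$ base, and the second is empty because $W^{(i)}$ and $W^{(i')}$ are disjoint. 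In either case $|A_{ij}\cap B_{i'j'}|=1$, establishing $m(a+2,b+2,1)\geq 5\,m(a,b,1)$.

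Iterating the recursive step from the $(2,2)$ base gives $m(2k,2k,1)\geq 5^{k-1}\cdot 5=5^{k}$, i.e.\ the even case, and from the $(1,1)$ base gives $m(2k+1,2k+1,1)\geq 5^{k}\cdot 2$, i.e.\ the odd case. The main obstacle is spotting why the naive product is off by a factor of $2$ in the intersection size, and finding the ``five disjoint copies indexed by the base system'' trick that forces exactly one of the two intersection contributions to be empty in every off-diagonal case.
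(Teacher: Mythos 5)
Your construction is correct: both base cases check out, the ``five disjoint copies'' composition does give $m(a+2,b+2,1)\geq 5\,m(a,b,1)$ (the only unstated step, $A_{ij}\cap B_{ij}=\emptyset$, is immediate from the same disjointness), and the iteration yields both bounds. The paper itself gives no proof of this proposition, citing it from F\"uredi, Gy\'arf\'as and Kir\'aly, and your argument is essentially the standard recursive blow-up construction used there.
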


On the other hand, they conjectured that there exists an $\epsilon>0$ such that $m(n,n,1)\leq (1-\epsilon)\binom{2n}{n}$
for every $n\geq 2$ and that 
\begin{equation}\label{con2}
\lim_{n\to \infty}\frac{m(n,n,1)}{\binom{2n}{n}}=0.
\end{equation}

Very recently, Holzman~\cite{Holzman}  proved the first conjecture in the following stronger form.

\begin{theorem}
[\cite{Holzman}]\label{Holz-thm}  Let $a_i, b_i\geq2$ for $1\leq i\leq m$, and let $\{(A_i,B_i)\}_{i=1}^m$ be a 1-cross intersecting set pair system with $|A_i|\leq a_i$ and $|B_i|\leq b_i$ for every $1\leq i\leq m$. Then
 $$   \sum_{i=1}^m \frac{1}{\binom{a_i+b_i}{a_i}}\leq\frac{29}{30}.$$
In particular, if $a,b\geq 2$ and $\{(A_i,B_i)\}_{i=1}^m$ is a 1-cross intersecting set pair system with $|A_i|\leq a$ and $|B_i|\leq b$ for every $1\leq i\leq m$, then
$$  m(a,b,1)\leq\frac{29}{30}{\binom{a+b}{a}}.$$
\end{theorem}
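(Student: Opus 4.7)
The plan is to prove this by a quantitative refinement of Bollob\'{a}s's probabilistic argument for Theorem~\ref{tbb}: set up the classical random-permutation bound, then exhibit a positive-probability event of orderings that the 1-cross intersecting condition (together with $a_i,b_i\ge 2$) forces to be ``wasted,'' i.e., disjoint from every event in the Bollob\'{a}s union.

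Formally, after padding each $A_i$ and $B_i$ with fresh dummy elements so that $|A_i|=a_i$ and $|B_i|=b_i$ without destroying the 1-cross intersecting property, let $X=\bigcup_i(A_i\cup B_i)$ and let $\pi$ be a uniformly random linear ordering of $X$. Let $E_i$ be the event that every element of $A_i$ precedes every element of $B_i$ in $\pi$. A direct symmetry computation gives $\Pr(E_i)=1/\binom{a_i+b_i}{a_i}$, and Bollob\'{a}s's argument in the proof of Theorem~\ref{tbb} shows that the events $E_i$ are pairwise disjoint, whence $\sum_i 1/\binom{a_i+b_i}{a_i}=\Pr\bigl(\bigcup_i E_i\bigr)\leq 1$.

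To sharpen the bound to $29/30$, I would produce an event $F$ of orderings with $\Pr(F)\ge 1/30$ that is disjoint from every $E_i$. The intended route is to pick a ``witness'' pair, say $(A_1,B_1)$, use $|A_1|\ge 2$ and $|B_1|\ge 2$ to single out two or three key elements, and prescribe a pattern of their relative positions in $\pi$ that automatically forbids every $E_j$. The 1-cross intersecting hypothesis is crucial here: for each $j\ne 1$ the unique element of $A_1\cap B_j$ acts as a pivot that couples the chosen pattern to a failure of $E_j$. The probability of the pattern is controlled by the relative order of a constant number of elements of $X$, so the target constant $1/30$ is obtained by a direct enumeration on a small fixed subset.

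The main obstacle is that a single constant-probability event $F$ must force \emph{all} $m$ events $E_i$ to fail simultaneously. A crude local obstruction (for example, ``the first position of $\pi$ lies in $B_1$'') only kills those $E_i$ whose $B_i$ contains the chosen element. The fallback plan is a case analysis driven by the 1-cross intersecting structure: partition $[m]\setminus\{1\}$ according to which element of the fixed $A_1$ lies in the corresponding $B_j$, and treat each class by an appropriate pattern, collecting constant probability per class that sums to at least $1/30$. The ``in particular'' bound $m(a,b,1)\le\tfrac{29}{30}\binom{a+b}{a}$ is then immediate from the first inequality by setting $a_i=a$ and $b_i=b$ for every $i$.
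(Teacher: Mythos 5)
Your setup is correct as far as it goes: padding with fresh dummy elements, the computation $\Pr(E_i)=1/\binom{a_i+b_i}{a_i}$, and the pairwise disjointness of the events $E_i$ are all fine, but they only reproduce Bollob\'as's bound $\sum_i 1/\binom{a_i+b_i}{a_i}\le 1$ (Theorem~\ref{tbb}). The entire content of the theorem --- the gain from $1$ to $\frac{29}{30}$ --- rests on your event $F$, and that is precisely the step you have not constructed; moreover, both routes you sketch for it break down. Any pattern prescribed on a bounded set of designated elements that is disjoint from $E_1$ must, in every ordering it contains, place some $\beta\in B_1$ before some $\alpha\in A_1$. But then an index $j$ whose pivots happen to be exactly these elements, $A_1\cap B_j=\{\alpha\}$ and $B_1\cap A_j=\{\beta\}$, is compatible with your pattern: the only constraint that $E_j$ imposes on your witness elements (namely that the element of $B_1\cap A_j$ precede the element of $A_1\cap B_j$) is satisfied, so $E_j$ retains positive probability inside $F$. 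Excluding such $j$ requires inspecting elements of $A_j,B_j$ beyond the pivots, where the same obstruction recurs one level deeper; since the $a_i,b_i$ are unbounded, this nesting is unbounded and no ``direct enumeration on a small fixed subset'' can close it. Your fallback plan is additionally fallacious as stated: if $F_\alpha$ is an event tailored to the class of indices $j$ with $A_1\cap B_j=\{\alpha\}$, then $\Pr\bigl(\bigcup_\alpha F_\alpha\bigr)$ counts toward the $\frac{1}{30}$ deficit only if each $F_\alpha$ is disjoint from \emph{every} $E_i$, across all classes; ``constant probability per class'' that is disjoint only from that class's events cannot be summed.

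For comparison: the paper does not reprove Theorem~\ref{Holz-thm} (it cites Holzman), but both Holzman's argument and the paper's strengthening to $\frac{5}{6}$ use the \emph{adaptive} (recursive) form of the same probabilistic idea rather than a one-shot forbidden event. Conditioning on which element $v$ is deleted yields the averaging identity of Lemma~\ref{avg}, $\Sigma({\cal S})=\frac{1}{|V({\cal S})|}\sum_{v\in V({\cal S})}\Sigma({\cal S}[I_{\bar v}^A]-\{v\})$, which permits induction on the ground set. Since deletion may create sets of size $1$, where the ratio bounds fail, the statement must first be strengthened to exact sizes with explicit exceptional configurations (as in Theorem~\ref{kmn3}); the constant then emerges from the ratio estimates of Lemmas~\ref{1/3} and~\ref{1/5} together with a structural case analysis, not from a single fixed pattern. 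Your plan, as written, is missing exactly this inductive mechanism, and without it the claimed bound is not established.
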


One of the ideas of Holzman was to prove a stronger and more detailed statement in order to employ the stronger induction assumption similar to the ideas of Bollob\'{a}s~\cite{Bol} in the proof of Theorem~\ref{tbb}. Holzman~\cite{Holzman} also writes:

\bigskip
\parbox{6.1in}{\em
It seems likely that our constant $\frac{29}{30}$ can be improved to $\frac{5}{6}$, which would be best possible...
}

\medskip
Mentioning sharpness of  $\frac{5}{6}$, Holzman refers to the following result of F\"{u}redi, Gy\'{a}rf\'{a}s and Kir\'{a}ly:
\begin{proposition}[\cite{FGK}] \label{FGK-prop} Let $\{(A_i,B_i)\}_{i=1}^m$ be a 1-cross intersecting set pair system with $|A_i|\leq 2$ and $|B_i|\leq 2$. Then $m\leq5$, and equality holds only if $\{A_i\}_{i=1}^5$ and $\{B_i\}_{i=1}^5$ form two complementary 5-cycles (that is, the vertices may be written as $0,1,2,3,4\pmod5$, so that $A_i=\{i,i+1\}$ and $B_i=\{i-1,i+2\}$ for $1\leq i\leq5$).
\end{proposition}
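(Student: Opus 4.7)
The plan is to apply Bollob\'{a}s's inequality (Theorem~\ref{tbb}) to bound $m$ and to leverage its equality clause together with the degree-sum identities forced by 1-cross intersection to pin down the extremal structure. Applying Bollob\'{a}s with $(a_i,b_i)=(|A_i|,|B_i|)$, each summand lies in $\{1/2,1/3,1/6\}$ since $|A_i|,|B_i|\in\{1,2\}$ (empty $A_i$ or $B_i$ would immediately violate cross intersection), and so $m/6\le 1$, giving $m\le 6$. If $m=6$, the Bollob\'{a}s equality clause forces $|A_i|=|B_i|=2$ together with a common $4$-element set $M$ and $B_i=M\setminus A_i$; but then two disjoint $2$-subsets of $M$ yield a pair $(A_i,B_j)$ with $A_i=B_j$ and hence $|A_i\cap B_j|=2$, contradicting 1-cross intersection. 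So $m\le 5$.

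Now assume $m=5$. I would first show $|A_i|=|B_i|=2$ for every $i$: any other size pattern either makes the Bollob\'{a}s sum exceed $1$ or makes it equal $1$ with non-uniform sizes, contradicting the equality clause. Under this, for each $A_i=\{x,y\}$ the four sets $B_j$ with $j\ne i$ each meet $A_i$ in exactly one element, giving the key identity
\[
d_B(x)+d_B(y)=4,\qquad d_B(z):=|\{j:z\in B_j\}|,
\]
and symmetrically $d_A(u)+d_A(v)=4$ for each $B_j=\{u,v\}$; also $\sum_z d_A(z)=\sum_z d_B(z)=10$. I would next show $d_A(z),d_B(z)\le 2$: if $d_A(z^*)\ge 3$ for some $z^*$, then writing each $A$-edge through $z^*$ as $\{z^*,w\}$, every such $w$ satisfies $d_B(w)=4-d_B(z^*)$, and combined with $d_A(z^*)+d_B(z^*)\le m=5$ and $\sum d_B=10$, a short casework on $d_B(z^*)\in\{0,1,2\}$ forces impossible configurations (e.g., four distinct $B_j$'s confined to a $3$-set). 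Together with $d_B(x)+d_B(y)=4$, this forces $d_A(z)=d_B(z)=2$ throughout $V:=\bigcup_i(A_i\cup B_i)$, so $|V|=5$.

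Consequently $G_A:=\{A_1,\dots,A_5\}$ and $G_B:=\{B_1,\dots,B_5\}$ are both $2$-regular on the same $5$-vertex set, hence both $5$-cycles. They are edge-disjoint (otherwise $A_i=B_j$ and $|A_i\cap B_j|=2$), so together they decompose $K_5$ into two complementary Hamilton cycles. Labelling $V$ cyclically so that $A_i=\{i,i+1\}\pmod{5}$, the constraint $A_i\cap B_i=\emptyset$ uniquely picks out $B_i=\{i-1,i+2\}$ as the edge of $G_B$ disjoint from $A_i$, recovering the configuration in the proposition. The main obstacle is the degree-bound step; the subcase $d_A(z^*)=3$, $d_B(z^*)=2$ is the tightest and requires tracking the $A$- and $B$-edge-sum identities simultaneously to produce a contradiction.
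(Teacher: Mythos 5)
The paper does not actually prove this proposition --- it is quoted verbatim from F\"uredi, Gy\'arf\'as and Kir\'aly \cite{FGK} and used as a black box --- so there is no in-paper argument to compare yours against; I can only assess your proof on its own terms, and it is essentially sound. The Bollob\'as step is correct: each summand lies in $\{1/2,1/3,1/6\}$, so $m\le 6$, the case $m=6$ dies on the equality clause, and for $m=5$ the only size pattern not excluded outright ($one$ summand equal to $1/3$, sum exactly $1$) is killed because the equality clause forces uniform sizes. The degree identities $d_B(x)+d_B(y)=4$ for $A_i=\{x,y\}$, $\sum d_B=10$, and $d_A(z)+d_B(z)\le 5$ are all right, and I checked that your sketched casework for $d_A(z^*)\ge 3$ does close: $d_B(z^*)=0$ gives $\sum d_B\ge 12$; $d_B(z^*)=1$ forces $k=3$ with all $B_j$ inside a $4$-set and then $A_4=B_1$; $d_B(z^*)=2$ forces $B_1,B_2,B_3\subseteq\{w_1,w_2,w_3\}$ and then no admissible $A_4$. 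From there the two $2$-regular graphs on a common $5$-set are complementary Hamilton cycles with $B_i$ the unique $G_B$-edge disjoint from $A_i$, which is the real content of the statement. One small imprecision in your last sentence (shared with the proposition as literally phrased): relabelling the \emph{vertices} alone does not always achieve $A_i=\{i,i+1\}$, because the index order need not follow the cyclic order of the edges of $G_A$ --- indeed, since each $G_B$-edge is disjoint from exactly one $G_A$-edge, \emph{every} bijection of $[5]$ onto the edges of the $A$-cycle yields a valid $1$-cross intersecting system, so a reindexing of the pairs is also needed to reach the displayed normal form. This does not affect the substance of your argument or of the extremal characterization.
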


The goal of this paper is to confirm Holzman's conjecture:

\begin{theorem}\label{kmn}  Let $a_i, b_i\geq2$ for $1\leq i\leq m$, and let $\{(A_i,B_i)\}_{i=1}^m$ be a 1-cross intersecting set pair system with $|A_i|\leq a_i$ and $|B_i|\leq b_i$ for every $1\leq i\leq m$. Then
  $$  \sum_{i=1}^m \frac{1}{\binom{a_i+b_i}{a_i}}\leq\frac{5}{6}.$$
In particular, if $a,b\geq 2$ and $\{(A_i,B_i)\}_{i=1}^m$ is a 1-cross intersecting set pair system with $|A_i|\leq a$ and $|B_i|\leq b$ for every $1\leq i\leq m$, then
$$  m(a,b,1)\leq\frac{5}{6}{\binom{a+b}{a}}.$$
\end{theorem}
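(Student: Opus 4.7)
The plan is to follow Holzman's overall strategy for Theorem~\ref{Holz-thm}, but with a refined inductive hypothesis designed to recover the extra factor from $\frac{29}{30}$ down to the sharp $\frac{5}{6}$. I would induct on the complexity parameter $N:=\sum_{i=1}^m(a_i+b_i)$, proving a weighted inequality of the form
$$\sum_{i=1}^m \frac{w_i}{\binom{a_i+b_i}{a_i}} \leq \frac{5}{6},$$
where the weights $w_i\geq 1$ carry local structural information about each pair. Since by Proposition~\ref{FGK-prop} the extremal configuration consists of two complementary $5$-cycles with every $|A_i|=|B_i|=2$, the point of the stronger statement is to quantify the slack that appears whenever the system deviates from this configuration, so that the induction can absorb the full $\tfrac16$ gap.

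The inductive step would split on the local shape of a pair $(A_1,B_1)$ of minimum total size. If $|A_1|+|B_1|\geq 5$, at least one set has size $\geq 3$, and we apply an element-removal (compression) step: identify an element $x\in A_i$ whose deletion leaves a 1-cross intersecting system of strictly smaller total size, reassign the weights $w_i$ appropriately to reflect the gain from the reduction, and invoke induction. If instead $|A_1|=|B_1|=2$, we use the 1-cross intersecting property to partition the remaining indices according to which element of $A_1$ lies in each $B_j$ and which element of $B_1$ lies in each $A_j$, obtaining four natural subsystems. On each part we either recurse on a smaller 1-cross intersecting system obtained by deleting the pinned elements, or reach the base case where Proposition~\ref{FGK-prop} bounds the total count directly by $5$, giving $m/\binom{4}{2}\leq 5/6$.

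The main obstacle will be designing the weights $w_i$ and the compression step so that every branch of the recursion recovers the correct amount of slack without double counting, and so that the case analysis closes up to exactly $\frac{5}{6}$ rather than stalling at $\frac{29}{30}$. Holzman's analysis lost a total of $\frac{1}{30}$ across several branches; I expect the key additional ingredient to be either a sharper bookkeeping on how many all-$(2,2)$ pairs can genuinely coexist in a fixed neighbourhood, or a structural lemma showing that any system close to saturating the bound must contain a copy of the $5$-cycle subsystem of Proposition~\ref{FGK-prop}, at which point the extra $\tfrac16$ slack is visible explicitly. A secondary, technical difficulty is making the compression argument compatible with the 1-cross (not just cross) intersecting constraint: unlike in the classical Bollob\'as proof, removing an element from $A_i$ can destroy the $|A_i\cap B_j|=1$ condition for several $j$ simultaneously, so the reduction must be chosen to preserve or, more realistically, restore this stronger property by also shrinking or deleting the affected pairs in a controlled way.
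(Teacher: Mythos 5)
Your proposal is a plan rather than a proof: it names the right difficulties but leaves all of them unresolved, and the one concrete structural choice you do make --- a weighted inequality $\sum_i w_i/\binom{a_i+b_i}{a_i}\leq\frac{5}{6}$ with undefined weights $w_i\geq 1$ --- is not what makes the argument close. The paper's strengthened induction hypothesis (Theorem~\ref{kmn3}) is of a different kind: it keeps the unweighted sum but extends the statement to systems containing sets of size $1$, excluding only three explicit degenerate configurations (two singleton $A_i,A_j$ with $B_i\cap B_j\neq\emptyset$, the symmetric situation, or singletons on both sides). This is forced by the mechanics of the averaging step (Lemma~\ref{avg}): deleting an element $v$ from every $A_i$ not containing it produces size-$1$ sets, so the induction hypothesis must tolerate them, and one must characterize exactly which size-$1$ configurations can still violate the bound. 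Without specifying this (or your weights) concretely, the inductive step cannot be checked, and your worry about ``double counting'' is precisely where a weight-based scheme is likely to stall.

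Two further ingredients of the actual proof are absent from your sketch. First, a new ratio lemma, $\binom{a+b-3}{b-1}/\binom{a+b}{b}\leq\frac{1}{5}$ for $a,b\geq 2$ (Lemma~\ref{1/5}), which is what lets several branches where three elements are pinned simultaneously be charged at the rate $\frac{1}{5}$ instead of $\frac{1}{3}\cdot\frac{3}{10}$; without it the case $|B_1|=2$, $|B_2|\geq 3$ does not come in under $\frac{5}{6}$. Second, your guess about ``how many all-$(2,2)$ pairs can genuinely coexist in a fixed neighbourhood'' is in the right direction --- the paper's Lemma~\ref{22-Diamond} shows that two $(2,2)$ pairs with intersecting $A$'s and intersecting $B$'s already force $\Sigma(\mathcal{S})\leq\frac{5}{6}$ --- and your intuition that a near-extremal system must exhibit the $5$-cycle of Proposition~\ref{FGK-prop} is validated in the final subcase, where that configuration is reconstructed explicitly and then shown to be incompatible with the remaining pair $(A_2,B_2)$. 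But these are the theorems that constitute the proof; identifying them as ``expected key ingredients'' is not the same as proving them, and as written your argument has no complete branch.
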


Our proof heavily uses and develops ideas of Holzman~\cite{Holzman}. In particular, instead of Theorem~\ref{kmn} we prove the following slightly stronger statement in order to use the stronger induction assumption.

\begin{theorem}\label{kmn3}
 Let $\{(A_i,B_i)\}_{i=1}^m$ be a 1-cross intersecting set pair system such that $|A_i|=a_i$ and $|B_i|=b_i$ for every $i$, $1\leq i\leq m$. Then
 $$\sum_{i=1}^{m}\frac{1}{\binom{a_i+b_i}{a_i}}\leq\frac{5}{6}$$ unless for some $1\leq i<j\leq m$ one of the following occurs:
 \begin{enumerate}[(a)]
     \item $|A_i|=|A_j|=1$ and $B_i\cap B_j\not=\emptyset$, or
     \item $|B_i|=|B_j|=1$ and $A_i \cap A_j \not= \emptyset$, or
     \item $|A_i|=|A_j|=|B_i|=|B_j|=1$.
 \end{enumerate}
\end{theorem}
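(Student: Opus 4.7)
The plan is to prove Theorem~\ref{kmn3} by strong induction on $N:=\sum_{i=1}^{m}(a_i+b_i)$, following Holzman's template for Theorem~\ref{Holz-thm} but with sharper bookkeeping that extracts full advantage of the enlarged list of exceptions (a)--(c).

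Base cases are handled directly. If every $a_i=b_i=1$, the 1-cross intersecting conditions force $m=2$ and exception (c) applies, so the conclusion is vacuous. If all $a_i,b_i\leq 2$ but not all sets are singletons, Proposition~\ref{FGK-prop}, combined with a short estimate of the mixed-size contributions, bounds the sum by $5/6$, with equality only for the complementary 5-cycle.

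For the inductive step, suppose $\{(A_i,B_i)\}_{i=1}^{m}$ has $\sum(a_i+b_i)=N$ and violates none of (a)--(c). Fix a pair $(A_1,B_1)$ of maximum $a_1+b_1$, pick an element $x\in A_1$ (the case $x\in B_1$ is symmetric), and partition the remaining indices as
\[
\c{F}_A=\{i\neq 1\st x\in A_i\},\ \ \c{F}_B=\{i\neq 1\st x\in B_i\},\ \ \c{F}_0=\{i\neq 1\st x\notin A_i\cup B_i\}.
\]
The 1-cross intersecting property immediately gives $\c{F}_A\cap\c{F}_B=\emptyset$ and $A_i\cap B_j=\{x\}$ whenever $i\in\c{F}_A\cup\{1\}$, $j\in\c{F}_B$, $i\neq j$. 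Consequently, keeping pair $1$, deleting $x$ from every $A_i$ with $i\in\c{F}_A$, and discarding all of $\c{F}_B$ yields a 1-cross intersecting family $\c{G}_1$ with strictly smaller total parameter; symmetrically, discarding pair $1$ together with $\c{F}_A$ and deleting $x$ from every $B_j$ with $j\in\c{F}_B$ yields a second reduced family $\c{G}_2$.

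I would then apply the induction hypothesis to $\c{G}_1$ and $\c{G}_2$ and translate the resulting bounds into a bound on the original sum using the identities $\frac{1}{\binom{a_i+b_i}{a_i}}=\frac{a_i}{a_i+b_i}\cdot\frac{1}{\binom{a_i+b_i-1}{a_i-1}}=\frac{b_i}{a_i+b_i}\cdot\frac{1}{\binom{a_i+b_i-1}{a_i}}$, with the choice of $x$ tuned so that a convex combination of the two inductive estimates yields $5/6$. The principal obstacle is handling the cases in which $\c{G}_1$ or $\c{G}_2$ itself falls into one of (a)--(c), disabling the induction. Because the exceptions fail in the original system, any new instance in the reduction must be created by the deletion of $x$ and must therefore involve a pair whose $A$- or $B$-set has shrunk to a singleton. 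Exploiting the rigidity of 1-cross intersecting systems that contain a singleton (if $|A_k|=1$, its unique element must lie in every other $B_\ell$), one shows either that the offending contribution admits a direct tight bound or that one can recurse on a still smaller subfamily. A careful organisation of this case analysis, together with a judicious choice of $(A_1,B_1)$ and $x$ chosen to minimise the exceptional configurations created, is what lowers the constant from Holzman's $29/30$ to the optimal $5/6$ witnessed by the complementary 5-cycle of Proposition~\ref{FGK-prop}.
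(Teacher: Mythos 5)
Your high-level strategy (induction on $\sum_i(a_i+b_i)$, Bollob\'as-style vertex deletion, and a separate treatment of the configurations that the exceptions (a)--(c) are designed to capture) is indeed the skeleton of the paper's argument, but the proposal has a genuine gap: everything that actually makes the constant $5/6$ rather than $29/30$ is deferred to ``a careful organisation of this case analysis,'' and that case analysis is precisely the content of the proof. The paper needs two new quantitative lemmas that your outline does not anticipate: Lemma~\ref{1/5}, the bound $\binom{a+b-3}{b-1}/\binom{a+b}{b}\le\frac15$ used when a deletion produces one singleton next to a set of size $2$ and a set of size $\ge 3$; and Lemma~\ref{22-Diamond}, which disposes of two interlocking $(2,2)$-pairs. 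The hardest case (no singletons at all, Case~4 of the paper) requires applying the averaging lemma on \emph{both} sides, using minimality to force $(2,2)$-pairs into each of three index classes $I_1,I_2,I_3$, and then ruling out the resulting rigid five-vertex configuration by showing $A_2$ cannot meet all five $B$-sets in single points. None of this is recoverable from the assertion that ``the offending contribution admits a direct tight bound or one can recurse''; without it the induction simply does not close, since the reduced families really do land in exceptions (a)--(c) and the exceptional systems can have $\Sigma$ as large as $5/6$ or more.

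There is also a technical flaw in the proposed reduction mechanism itself. You fix a single element $x$, form two reduced families $\c{G}_1,\c{G}_2$, and hope that ``a convex combination of the two inductive estimates yields $5/6$.'' But the weights in the identity $\frac{1}{\binom{a_i+b_i}{a_i}}=\frac{a_i}{a_i+b_i}\cdot\frac{1}{\binom{a_i+b_i-1}{a_i-1}}$ depend on $i$, so no single coefficient $\lambda$ makes $\lambda\,\Sigma(\c{G}_1)+(1-\lambda)\,\Sigma(\c{G}_2)$ dominate $\Sigma(\c{S})$ uniformly; moreover each $\c{G}_t$ discards an entire block of indices whose contribution is then uncontrolled. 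The correct statement is Lemma~\ref{avg}, which averages $\Sigma(\c{S}[I_{\bar v}^A]-\{v\})$ over \emph{all} $v\in V(\c{S})$ to obtain an exact identity and then passes to the maximizing vertex; your single-$x$, two-family version is not an inequality in the right direction as stated. Finally, your base case appeal to Proposition~\ref{FGK-prop} only covers the case where all $a_i=b_i=2$; mixed systems containing singletons are exactly where the exceptions live and need the explicit subcases of the paper's Cases~1--3.
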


The structure of the paper is as follows. In the next section, we introduce notation, discuss the setup, cite two important lemmas 
from~\cite{Holzman}, and prove two new lemmas. In Section 3 we prove Theorem~\ref{kmn3}.


\section{Setup and lemmas}

Following the notation in~\cite{Holzman}, for a set pair system ${\cal{S}}=\{(A_i,B_i)\}_{i\in I}$ with ground set $V({\cal{S}})=\cup_{i\in I}(A_i\cup B_i)$ and some $R\subseteq V({\cal{S}})$, ${\cal{S}}-R$ is the set pair system $\{(A_i\backslash R,B_i\backslash R)\}_{i\in I}$. 

An immediate corollary of this definition is that if ${\cal{S}}$ is 1-cross intersecting and there exists no $v\in R$ and $i\not=j\in I$ such that $v\in A_i\cap B_j$, then ${\cal{S}}-R$ is also 1-cross intersecting. Also, for $J\subseteq I$, ${\cal{S}}[J]$ is the set pair system $\{(A_i,B_i)\}_{i\in J}$.

For a set pair system ${\cal{S}}=\{(A_i,B_i)\}_{i\in I}$, let $\Sigma({\cal{S}})=\sum_{i\in I}\frac{1}{\binom{|A_i|+|B_i|}{|A_i|}}$. For any $v\in V({\cal{S}})$, let $I_{\bar{v}}^A=\{i\in I| v\not\in A_i\}$ and similarly $I_{\bar{v}}^B=\{i\in I| v\not\in B_i\}$. Now, we are ready to cite the following  result from~\cite{Holzman}, where the idea of Bollob\' as' proof of Theorem~\ref{tbb} is stated in a convenient form.
  
\begin{lemma}[\cite{Holzman}] \label{avg}
Let ${\cal{S}}=\{(A_i,B_i)\}_{i\in I}$ be a set pair system such that $A_i\not=\emptyset$, $B_i\not=\emptyset$ and $A_i\cap B_i=\emptyset$ for every $i\in I$. Then
\begin{center}
    $\Sigma({\cal{S}})=\frac{1}{|V({\cal{S}})|}\sum_{v\in V({\cal{S}})}\Sigma({\cal{S}}[I_{\bar{v}}^A]-\{v\})\leq \max_{v\in V({\cal{S}})}\Sigma({\cal{S}}[I_{\bar{v}}^A]-\{v\})$,
\end{center}
and similarly
\begin{center}
    $\Sigma({\cal{S}})=\frac{1}{|V({\cal{S}})|}\sum_{v\in V({\cal{S}})}\Sigma({\cal{S}}[I_{\bar{v}}^B]-\{v\})\leq \max_{v\in V({\cal{S}})}\Sigma({\cal{S}}[I_{\bar{v}}^B]-\{v\})$.
\end{center}
\end{lemma}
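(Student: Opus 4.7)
The plan is to establish the equality by a double-counting argument over vertex/index pairs, after which the inequality is immediate since an average never exceeds a maximum. For the $A$-version, I would first unfold the definition: for each $v \in V({\cal{S}})$, the system ${\cal{S}}[I^A_{\bar v}] - \{v\}$ consists of the pairs $(A_i, B_i \setminus \{v\})$ for those $i$ with $v \notin A_i$, so
\[
\Sigma({\cal{S}}[I^A_{\bar v}]-\{v\}) = \sum_{i:\, v \notin A_i} \frac{1}{\binom{a_i + b_i - [v \in B_i]}{a_i}},
\]
where $a_i = |A_i|$, $b_i = |B_i|$, and $[\,\cdot\,]$ is the Iverson bracket.

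Next I would swap the order of summation in $\sum_{v \in V({\cal{S}})} \Sigma({\cal{S}}[I^A_{\bar v}]-\{v\})$. For each fixed index $i$, the hypothesis $A_i \cap B_i = \emptyset$ partitions the vertices $v$ with $v \notin A_i$ into the $b_i$ vertices of $B_i$ (each contributing $\binom{a_i+b_i-1}{a_i}^{-1}$) and the $|V({\cal{S}})| - a_i - b_i$ vertices outside $A_i \cup B_i$ (each contributing $\binom{a_i+b_i}{a_i}^{-1}$). The only algebraic step is the identity
\[
\frac{b_i}{\binom{a_i+b_i-1}{a_i}} = \frac{a_i+b_i}{\binom{a_i+b_i}{a_i}},
\]
which follows from $\binom{a_i+b_i}{a_i} = \frac{a_i+b_i}{b_i}\binom{a_i+b_i-1}{a_i}$. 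Adding the two contributions from index $i$ gives $|V({\cal{S}})|/\binom{a_i+b_i}{a_i}$, so summing over $i$ yields $|V({\cal{S}})| \cdot \Sigma({\cal{S}})$. Dividing by $|V({\cal{S}})|$ proves the claimed identity, and bounding the average by the maximum gives the inequality. The $B$-version is obtained by the symmetric argument, using $\binom{a_i+b_i}{a_i} = \binom{a_i+b_i}{b_i}$.

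There is no real obstacle beyond spotting the binomial identity above; the argument is essentially Bollob\'as's classical averaging recast in a more quantitative per-vertex form. The assumption $A_i,B_i\neq\emptyset$ is used only to ensure $V({\cal{S}})\neq\emptyset$ so that the division by $|V({\cal{S}})|$ is legitimate, while $A_i\cap B_i=\emptyset$ is exactly what validates the three-part partition $V({\cal{S}})=A_i\sqcup B_i\sqcup(V({\cal{S}})\setminus(A_i\cup B_i))$ on which the vertex count rests.
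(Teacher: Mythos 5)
Your proof is correct, and since the paper imports this lemma from Holzman without reproving it, your double-counting over pairs $(v,i)$ with $v\notin A_i$ together with the identity $b_i\binom{a_i+b_i}{a_i}=(a_i+b_i)\binom{a_i+b_i-1}{a_i}$ is exactly the intended argument (Bollob\'as's averaging recast per vertex). One small correction to your closing remark: $B_i\neq\emptyset$ is needed for more than making $V({\cal{S}})$ nonempty --- for $b_i=0$ the quantity $b_i/\binom{a_i+b_i-1}{a_i}$ is $0/0$ and index $i$ would contribute $|V({\cal{S}})|-a_i$ rather than $|V({\cal{S}})|/\binom{a_i}{a_i}$, so the nonemptiness of $B_i$ is what validates the binomial identity in the $A$-version (and symmetrically $A_i\neq\emptyset$ in the $B$-version).
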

We will also use the following observation of Holzman.
\begin{lemma}[\cite{Holzman}] \label{1/3} 
For $a,b\geq2$ we have $\frac{\binom{a+b-2}{a-1}}{\binom{a+b}{a}}\leq\frac{1}{3}$. Moreover, the upper bound can be improved to $\frac{3}{10}$ unless $a=b=2$.
\end{lemma}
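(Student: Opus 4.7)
The plan is to reduce both inequalities to elementary algebra by first computing the ratio in closed form. A direct expansion gives
\[
\frac{\binom{a+b-2}{a-1}}{\binom{a+b}{a}} = \frac{(a+b-2)!\,a!\,b!}{(a-1)!\,(b-1)!\,(a+b)!} = \frac{ab}{(a+b)(a+b-1)},
\]
so both bounds become polynomial inequalities in the integers $a,b \geq 2$.

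For the $\frac{1}{3}$ bound, I would cross-multiply to obtain the equivalent statement $(a+b)(a+b-1) \geq 3ab$, and rearrange it as
\[
(a-b)^2 + ab \geq a+b.
\]
Since $a,b\geq 2$ implies $(a-1)(b-1)\geq 1$, i.e., $ab \geq a+b$, and since $(a-b)^2 \geq 0$, the inequality holds. Equality forces both $(a-b)^2 = 0$ and $ab = a+b$, which yields $a=b=2$ as the unique equality case.

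For the sharper $\frac{3}{10}$ bound, the equivalent statement is $3(a-b)^2 + 2ab \geq 3(a+b)$. I would split into cases (WLOG $a\leq b$): if $a=2$ the inequality reduces to $3b^2 - 11b + 6 \geq 0$, which is an upward-opening quadratic vanishing at $b=3$ and strictly increasing for $b\geq 2$, so it holds for every integer $b \geq 3$; if $a\geq 3$, then already $2ab - 3(a+b) = a(2b-3) - 3b \geq 3(2b-3) - 3b = 3(b-3) \geq 0$ (using $a\geq 3$ and $b\geq a\geq 3$), and the nonnegative term $3(a-b)^2$ only helps. In either case the inequality holds, with the only excluded configuration being $a=b=2$.

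The proof is essentially routine; the only step requiring real care is the case split for the $\frac{3}{10}$ bound, where the boundary cases $(a,b)=(2,3)$ and $(a,b)=(3,3)$ are tight and must be confirmed to satisfy the (weak) inequality with equality. No induction or combinatorial structure is needed beyond the closed-form identity for the ratio.
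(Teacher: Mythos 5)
Your proof is correct, and there is nothing in the paper to compare it against line by line: the paper imports this lemma from Holzman's paper \cite{Holzman} without reproving it. Your reduction of the ratio to $\frac{ab}{(a+b)(a+b-1)}$ is right, the rearrangement of the $\frac{1}{3}$ bound to $(a-b)^2+ab\geq a+b$ is valid (with $ab\geq a+b$ following from $(a-1)(b-1)\geq1$), and your equality analysis correctly isolates $a=b=2$. The $\frac{3}{10}$ case split also checks out: for $a=2$ the quadratic $3b^2-11b+6$ has roots $\frac{2}{3}$ and $3$ and is increasing for $b\geq2$ (vertex at $b=\frac{11}{6}$), so it is nonnegative exactly for integers $b\geq3$, consistent with excluding only $(2,2)$; for $3\leq a\leq b$ your estimate $a(2b-3)-3b\geq3(b-3)\geq0$ is sound, and both boundary cases $(2,3)$ and $(3,3)$ indeed attain $\frac{3}{10}$. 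It is worth noting that your method is in the same elementary spirit as, but slightly cleaner than, the paper's own proof of the analogous Lemma~\ref{1/5}: there the authors substitute $c=a-1$, reduce to a cubic inequality, and argue via the sign of a partial derivative plus boundary factorizations, whereas your completing-the-square rearrangement into $3(a-b)^2+2ab\geq3(a+b)$ avoids calculus entirely and makes the equality cases transparent. Either technique would suffice for both lemmas; yours trades the derivative bookkeeping for a symmetric algebraic identity, which is arguably the more robust habit for inequalities of this binomial-ratio type.
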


Our first lemma is in the spirit  of Lemma \ref{1/3}.
\begin{lemma} \label{1/5}
For $a,b\geq2$, we have $\frac{\binom{a+b-3}{b-1}}{\binom{a+b}{b}}\leq\frac{1}{5}$.
\end{lemma}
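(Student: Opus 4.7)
The plan is to reduce the lemma to the polynomial inequality $5ab(a-1)\le (a+b)(a+b-1)(a+b-2)$ via the direct calculation
$$\frac{\binom{a+b-3}{b-1}}{\binom{a+b}{b}} = \frac{ab(a-1)}{(a+b)(a+b-1)(a+b-2)},$$
and then to prove it by induction on $a+b$, handling $b=2$ and $a=2$ as base cases. For $b=2$, after canceling $a>0$, the inequality becomes $(a-3)(a-4)\ge 0$, which holds for every integer $a\ge 2$: the only real values where it fails lie in the open interval $(3,4)$, which contains no integer. For $a=2$, it reduces to $(b+1)(b+2)\ge 10$, immediate for $b\ge 2$.

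For the inductive step I assume $a,b\ge 3$ and split on whether $a\le 2b$ or $a>2b$. In the first case, I apply the induction hypothesis to $(a,b-1)$, which is valid since $b-1\ge 2$, and multiply both sides by $b/(b-1)$; what remains is to verify $\tfrac{b(a+b-3)}{b-1}\le a+b$, which unwinds to exactly $a\le 2b$. In the second case, I apply the induction hypothesis to $(a-1,b)$, valid since $a-1\ge 2b\ge 6$, and multiply by $a/(a-2)$; the required inequality $\tfrac{a(a+b-3)}{a-2}\le a+b$ unwinds to exactly $a\ge 2b$. The two cases cover all $(a,b)$ with $a,b\ge 3$, completing the induction.

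The main obstacle is that the inequality is genuinely discrete. For real parameters near $(a,b)=(3.5,2)$ the statement fails, so no purely analytic tool such as AM--GM or convexity can establish the bound $1/5$ without invoking integrality of $a$ and $b$. In the plan above this discreteness is used only at the $b=2$ base case, where $(a-3)(a-4)\ge 0$ holds precisely because integer $a\ge 2$ avoids the open interval $(3,4)$; the inductive manipulations are valid over the reals and merely propagate the base case. It is worth noting that this also pinpoints the extremal cases $(a,b)\in\{(3,2),(4,2)\}$ for which equality holds in the lemma.
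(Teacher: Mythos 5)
Your proof is correct. The reduction to the polynomial inequality $5ab(a-1)\le(a+b)(a+b-1)(a+b-2)$ is identical to the paper's first step (and your computation of the ratio of binomial coefficients is right), but from there the two arguments diverge. The paper substitutes $c=a-1$, shows the partial derivative of $f(c,b)=(b+c)^3-(b+c)-5bc(c+1)$ with respect to $c$ is positive for $c\ge3$, verifies $f(3,b)=(b-1)(b-2)(b+12)\ge0$, and checks $c=1,2$ by explicit factorization; you instead run a two-variable induction on $a+b$ with base cases $a=2$ and $b=2$ and a case split on $a\le 2b$ versus $a>2b$, where each branch reduces to a linear inequality after comparing with the induction hypothesis at $(a,b-1)$ or $(a-1,b)$. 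I checked both inductive steps: the multiplier $\frac{b(a+b-3)}{(b-1)(a+b)}\le1$ is exactly $a\le2b$ and $\frac{a(a+b-3)}{(a-2)(a+b)}\le1$ is exactly $a\ge2b$, so the cases mesh, and the base cases $(a-3)(a-4)\ge0$ and $(b+1)(b+2)\ge10$ are right. Your approach avoids calculus entirely and isolates the use of integrality in a single base case, which is conceptually cleaner; the paper's approach is shorter to write down and requires no insight into where to split cases. Both correctly identify the equality cases $(a,b)\in\{(3,2),(4,2)\}$ (in the paper these appear as $g(3,2)=5$ and $f(3,2)=0$ with $c=3$), and your remark that the inequality fails for real $a$ near $3.5$ when $b=2$ is accurate and explains why some use of integrality is unavoidable.
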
 
\begin{proof}
	Note that $$\frac{\binom{a+b-3}{b-1}}{\binom{a+b}{b}}=\frac{ab(a-1)}{(a+b)(a+b-1)(a+b-2)}.$$ Let $g(a,b)=\frac{(a+b)(a+b-1)(a+b-2)}{ab(a-1)}$. We want to show $g(a,b)\geq5$, for all integers $a,b\geq2$. Let $c=a-1$. We have $g(a,b)=g(c+1,b)=\frac{(b+c+1)(b+c)(b+c-1)}{bc(c+1)}$. So, 
$$\mbox{\em	$g(a,b)\geq5$  if and only if $f(c,b)=(b+c)^3-(b+c)-5bc(c+1)\geq0$.}$$
 Now, the derivative of $f$ with respect to $c$ is $$3(b+c)^2-1-5b(2c+1)=3(b-c)^2+b(2c-5)-1,$$ which is positive for $c\geq 3$ and $b\geq2$. On the other hand, $$f(3,b)=b^3+9b^2-34b+24=(b-1)(b-2)(b+12)\geq0$$ for $b\geq2$, which together with the positive derivative proves $g(a,b)\geq5$ for $a\geq4$ and $b\geq2$. Now, when $c=1$ we have $f(1,b)=b^3+3b^2-8b>0$ for $b\geq2$, implying $g(2,b)\geq5$ for $b\geq2$. When $c=2$ and therefore $a=3$, we have $f(2,b)=b^3+6b^2-19b-2>0$ for $b\geq3$, and $g(3,2)=5$, which implies $g(3,b)\geq5$ for $b\geq2$. This finishes the proof.
\end{proof}

For ease of notation, define $f(A_i, B_i) = \frac{1}{\binom{|A_i|+|B_i|}{|A_i|}}$ for any two sets $A_i, B_i$. Now, we prove the following Lemma.

\begin{lemma}\label{22-Diamond}
In a 1-cross intersecting set pair system ${\cal{S}} = \{(A_i, B_i)\}_{i \in I}$, if $|A_i| = |A_j| = 2$ with $A_i \cap A_j \neq \emptyset$ and $|B_i| = |B_j| = 2$ with $B_i \cap B_j \neq \emptyset$ for some $i, j \in I$, then $\Sigma({\cal{S}}) \leq \frac{5}{6}$.
\end{lemma}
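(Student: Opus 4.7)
The plan is to first extract the exact structure of $(A_i, B_i)$ and $(A_j, B_j)$, then classify every other pair by how it interacts with the four elements that appear in those four sets, and finally apply Bollob\'as's inequality (Theorem~\ref{tbb}) on carefully chosen projections, using Lemmas~\ref{1/3} and~\ref{1/5} to convert to the desired bound $5/6$.

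First, the hypotheses $|A_i|=|A_j|=|B_i|=|B_j|=2$ with $A_i\cap A_j\neq\emptyset$, $B_i\cap B_j\neq\emptyset$, together with the $1$-cross intersecting conditions and $A_i\cap B_i=A_j\cap B_j=\emptyset$, force (up to relabeling) $A_i=\{x,y\}$, $A_j=\{x,z\}$, $B_i=\{u,z\}$, $B_j=\{u,y\}$ with $u,x,y,z$ pairwise distinct, contributing $2/\binom{4}{2}=1/3$ to $\Sigma(\mathcal{S})$. For each $k\neq i,j$, the conditions $|A_k\cap B_i|=|A_k\cap B_j|=1$ (and their symmetric counterparts for $B_k$ against $A_i,A_j$) combined with $A_k\cap B_k=\emptyset$ force $(A_k,B_k)$ to fall into exactly one of three types: \emph{Type~I} ($u\in A_k$, $x\in B_k$, $y,z\notin A_k\cup B_k$); \emph{Type~II} ($u\in A_k$, $\{y,z\}\subseteq B_k$, $x\notin B_k$); or \emph{Type~III} ($\{y,z\}\subseteq A_k$, $x\in B_k$, $u\notin A_k$). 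A crucial observation is that Types~II and~III cannot coexist, since for $k\in I_2$ and $k'\in I_3$ we would get $\{y,z\}\subseteq A_{k'}\cap B_k$, contradicting $|A_{k'}\cap B_k|=1$. So WLOG $I_3=\emptyset$. Further partition $I_2=I_2^0\sqcup I_2^1$ by whether $x\notin A_k$ or $x\in A_k$.

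The heart of the proof is a pair of Bollob\'as applications on projected subsystems. The first is the combined family $\mathcal{T}=\{(A_k\setminus\{u\},B_k\setminus\{x\}):k\in I_1\}\cup\{(A_k\setminus\{u\},B_k\setminus\{y,z\}):k\in I_2^0\}$, which lives on $V(\mathcal{S})\setminus\{u,x,y,z\}$; a routine case check of all cross-intersections $A_k\cap B_{k'}$ within $I_1\cup I_2^0$ confirms their unique element lies outside $\{u,x,y,z\}$, so $\mathcal{T}$ is cross-intersecting and Theorem~\ref{tbb} yields
\[
\sum_{k\in I_1}\binom{a_k+b_k-2}{a_k-1}^{-1}+\sum_{k\in I_2^0}\binom{a_k+b_k-3}{a_k-1}^{-1}\leq 1.
\]
Lemma~\ref{1/3} bounds the ratio of each Type~I summand to $f(A_k,B_k)$ by $1/3$, and the $a\leftrightarrow b$ symmetric form of Lemma~\ref{1/5} bounds the corresponding Type~II$^{0}$ ratio by $1/5$, giving $\sum_{k\in I_1\cup I_2^0}f(A_k,B_k)\leq 1/3$. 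The second projection, $\{(A_k\setminus\{u,x\},B_k\setminus\{y,z\}):k\in I_2^1\}$, is again cross-intersecting on $V(\mathcal{S})\setminus\{u,x,y,z\}$, and since Type~II$^{1}$ pairs satisfy $a_k,b_k\geq 2$ the direct ratio estimate $\binom{a+b-4}{a-2}/\binom{a+b}{a}\leq 1/6$ (verified by calculus on the closed form) together with Bollob\'as gives $\sum_{k\in I_2^1}f(A_k,B_k)\leq 1/6$. Summing everything yields $\Sigma(\mathcal{S})\leq 1/3+1/3+1/6=5/6$.

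The main obstacle will be the boundary cases in which some $|A_k|$ or $|B_k|$ equals $1$ so that Lemmas~\ref{1/3} and~\ref{1/5} do not directly apply. These are handled by noting that any ``short'' pair in one type forces other types to collapse: for example, a Type~I pair $(\{u\},\{x\})$ forces $I_2=\emptyset$ (since $\{u\}\cap B_{k'}=\emptyset$ for $k'\in I_2$), leaving $\Sigma\leq 1/3+1/2=5/6$ by the direct bound $f\leq 1/2$, and this is exactly where equality occurs. Cataloguing the analogous short-pair configurations (such as Type~I with $B_k=\{x\}$ forcing $I_2^0=\emptyset$ but allowing $I_2^1$, or Type~II$^0$ with $A_k=\{u\}$ forcing $|I_2^1|=0$) and checking the Bollob\'as-plus-ratio step in each non-degenerate regime completes the bookkeeping.
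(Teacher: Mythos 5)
Your proposal is correct and follows essentially the same route as the paper: the same four-point configuration, the same exhaustive classification of the remaining pairs (your Types I, II$^0$, II$^1$ correspond, after swapping the roles of the $A$'s and $B$'s, to the paper's partition into $I_2$ and $I_1$), and the same Bollob\'as-projection-plus-ratio-lemma machinery, the only substantive difference being that you bound the doubly-reduced family by the direct ratio $\binom{a+b-4}{a-2}/\binom{a+b}{a}\le 1/6$, which absorbs the $(2,2)$ pairs that the paper treats as a separate triangle case. One caution for the bookkeeping you defer: in the families where two elements are deleted from one side, the Bollob\'as step already degenerates when that side has size $2$ (not only size $1$), e.g.\ $B_k=\{y,z\}$ or $A_k=\{u,x\}$; these configurations do collapse the system just as your size-$1$ examples do, but they must be listed alongside them.
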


\begin{proof}
Suppose we have $|A_1| = |A_2| = 2$ with $A_1 \cap A_2 \neq \emptyset$ and $|B_1| = |B_2| = 2$ with $B_1 \cap B_2 \neq \emptyset$. Say $A_1 = \{v,x\}$, $A_2 = \{v,y\}$, $B_1 = \{y,w\}$, and $B_2 = \{x,w\}$. Notice that if $x \in A_i$ for some $i \geq 3$, then we must also have $y \in A_i$ in order for $A_i$ to intersect $B_1$. Similarly, if $x \in B_j$ for $j \geq 3$, then $y \in B_j$. Thus we must have either $x,y \in A_i$ or $w \in A_i$, and either $x,y \in B_j$ or $v \in B_j$. Since our set pair system is 1-cross intersecting, we cannot have $x,y \in A_i$ and $x,y \in B_j$. So, by symmetry, we may assume $x,y \notin B_j$ for all $j \geq 3$. Notice that this gives $v \in B_j$ for all $j \geq 3$. 

We partition all indices other than 1 and 2 into two sets. Let $I_1 = \{i \geq 3 \, | \, w \in B_i\}$, $I_2 = \{i \geq 3 \, | \, w \notin B_i\}$. Notice that for all $i,j \in I_1$, $x,y \in A_i$ and $v,w \in B_j$, so no $A_i$, $B_j$ can intersect in any of $v,w,x,y$. Thus ${\cal{S}}[I_1] - \{v,w,x,y\}$ is 1-cross intersecting, so $\Sigma({\cal{S}}[I_1] - \{v,w,x,y\}) \leq 1$. 

{\bf Case 1:}  $|A_i|, |B_i| \geq 2$ for all $i \in I$. 
 If there exists $i \in I_1$ with $|A_i|=|B_i| = 2$, then $A_1, A_2, A_i$ form a triangle, so $i = 3$ and there are no other sets in our system. This gives $$\Sigma({\cal{S}}) = f(A_1, B_1) + f(A_2, B_2) + f(A_i, B_i) = 1/6 +1/6 + 1/6 < 5/6,$$ and we are done. Thus we may assume that there are no such pairs in $I_1$. Applying Lemma \ref{1/3} twice gives
\[ \Sigma({\cal{S}}[I_1]) = \sum\limits_{i \in I_1} \frac{1}{\binom{|A_i|+|B_i|}{|A_i|}} \leq \frac{3}{10}\sum\limits_{i \in I_1} \frac{1}{\binom{|A_i|+|B_i|-2}{|A_i|-1}} \leq \frac{1}{10} \sum\limits_{i \in I_1} \frac{1}{\binom{|A_i|+|B_i|-4}{|A_i|-2}} \leq \frac{1}{10}. \]

Since for all $i, j \in I_2$, $v \in B_j$, $w,x,y \notin B_j$, and $v \notin A_i$, we also have that ${\cal{S}}[I_2] - \{v,w,x\}$ is 1-cross intersecting, so $\Sigma({\cal{S}}[I_2] - \{v,w,x\}) \leq 1$. Notice also that $|A_i - \{v,w,x\}| = |A_i| - 1$ and $|B_i - \{v,w,x\}| = |B_i|-1$ for all $i \in I_2$. We can apply Lemma \ref{1/3} to get  
\[ \Sigma({\cal{S}}[I_2])=\sum\limits_{i \in I_2} \frac{1}{\binom{|A_i|+|B_i|}{|A_i|}} \leq \frac{1}{3}\sum\limits_{i \in I_2} \frac{1}{\binom{|A_i|+|B_i|-2}{|A_i|-1}} \leq \frac{1}{3}.\]

Thus in total, we have $$ \Sigma({\cal{S}}) = f(A_1, B_1) + f(A_2, B_2) + \Sigma({\cal{S}}[I_1]) + \Sigma({\cal{S}}[I_2]) \leq \frac{1}{6} + \frac{1}{6} + \frac{1}{10} + \frac{1}{3} = \frac{23}{30} < \frac{5}{6}. $$

{\bf Case 2:} ${\cal{S}}$ contains a set of size $1$,  say $|A_3|=1$. Then we must have $A_3 = \{w\}$ and $w \in B_i$ for all $i \geq 4$. Thus $I_2 = \{3\}$ and $I_1 = I - [3]$. Recall that for all $i \in I_1$, $x,y \in A_i$ and $v,w \in B_i$. If $|B_3| = 1$, then $I_1 = \emptyset$ since $B_3 = \{v\}$ and no $B_i$ can contain $B_3$. So, in this case 
$$ \Sigma({\cal{S}}) = f(A_1, B_1) + f(A_2, B_2) + f(A_3,B_3)=\frac{1}{6} + \frac{1}{6} + \frac{1}{2} = \frac{5}{6}. $$

If $|B_3| \geq 2$, then $f(A_3,B_3) \leq \frac{1}{3}$ and we may have $I_1 \neq \emptyset$. By the same argument as above, $\Sigma({\cal{S}}[I_1]) \leq \frac{1}{10}$ in this case. Thus in total, we have 
$$ \Sigma({\cal{S}}) = f(A_1, B_1) + f(A_2, B_2) + f(A_3, B_3) + \Sigma({\cal{S}}[I_1])\leq \frac{1}{6} + \frac{1}{6} + \frac{1}{3} + \frac{1}{10} = \frac{23}{30} < \frac{5}{6}. $$ This concludes all possible cases.
\end{proof}

\section{Proof of Theorem~\ref{kmn}}

Suppose that the theorem does not hold and
	 ${\cal{S}}=\{(A_i,B_i)\}_{i\in I}$ is a counter-example  is minimal with respect to $\sum_{i\in I} (a_i+b_i)$. 
Since adding to a set $A_i$ or $B_i$ an element outside of $V({\cal{S}})$ leaves the system $1$-cross intersecting,
we may assume that $|A_i|=a_i$ and $|B_i|=b_i$ for all $i\in I$.
	 
	 Let $I=\{1,2,\ldots,m\}$. By the choice of ${\cal{S}}$, $\Sigma({\cal{S}})=\sum_{i\in I}\frac{1}{\binom{a_i+b_i}{a_i}}>\frac{5}{6}$. We consider cases based on the number of sets $A_i$ or $B_j$ with size 1.
	 
\medskip	 
	
\noindent{\bf Case 1:} There is exactly one set of size $1$.
		Without loss of generality, let $|A_1|=1$ and $A_1=\{x\}$. We have two subcases:
		
        \emph{Subcase 1.1:} There exists some $ i\in I-\{1\}$ such that $|B_i|=2$.
	We can assume $|B_2|=2$. Since $|A_1\cap B_2|=1$, we may assume $B_2=\{x,y\}$. This means for each $j\geq3$,  $y\in A_j$ and $x\in B_j$. Then by deleting $x$ and $y$ from all $A_j$ and $B_j$, $j \geq 3$, we get for $I_1=I-\{1,2\}$
 a 1-cross intersecting system ${\cal{S}}_1=\{(A_i-y,B_i-x)\}_{i\in I_1}$.
	 So, by Lemmas \ref{avg} and \ref{1/3},
			\[\Sigma({\cal{S}}[I_1])=\sum_{i\in I_1}\frac{1}{\binom{|A_i|+|B_i|}{|A_i|}}\leq \frac{1}{3}\sum_{i\in I_1}\frac{1}{\binom{|A_i|+|B_i|-2}{|A_i|-1}}=\frac{1}{3}\Sigma({\cal{S}}[I_1]-\{x,y\})\leq\frac{1}{3}. \] This implies $\Sigma({\cal{S}}) = f(A_1, B_1)+f(A_2,B_2) + \Sigma({\cal{S}}[I_1])\leq \frac{1}{3}+\frac{1}{6}+\frac{1}{3}=\frac{5}{6}.$
			
			\emph{Subcase 1.2:} For all $i\geq2$,  $|B_i|\geq3$.
By Lemma \ref{avg} we know there exists some $v\in V({\cal{S}})$ such that $\Sigma({\cal{S}}[I_{\bar{v}}^A]-\{v\})>\frac{5}{6}$. However, since $|B_i|\geq3$ for all $i\geq2$, ${\cal{S}}[I_{\bar{v}}^A]-\{v\}$ either has at most one set of size $1$, or it has exactly two sets of size 1, which can only occur if $|B_1|=2$ and $v \in B_1$.  In the former scenario, by the minimality of ${\cal{S}}$
 we reach a contradiction.
In the latter scenario, suppose $B_1=\{v,y\}$. Now, all the $A_k$ sets in ${\cal{S}}[I_{\bar{v}}^A]-\{v\}$ have $y$ in them, and all the $B_k$ sets in ${\cal{S}}[I_{\bar{v}}^A]-\{v\}$ have $x$ in them (where $k\geq2$), which means for $J=I-\{1\}$ we have 
$$\Sigma({\cal{S}}[I_{\bar{v}}^A]-\{v\})\leq f(A_1, B_1)+\frac{1}{3}\Sigma({\cal{S}}[J_{\bar{v}}^A]-\{v,x,y\})\leq \frac{1}{3}+ \frac{1}{3}=\frac{2}{3}.$$
 a contradiction. 
	
	\bigskip		
\noindent{\bf  Case 2:} There are exactly two sets of size $1$.
		
			\emph{Subcase 2.1:} These two sets are $A_i$ and $B_j$ where $i\not=j$.
			Without loss of generality, let $i=1, j=2$. Since $|A_1 \cap B_2| =1$, we have $A_1=B_2=\{x\}$, which means $x\in A_j,B_j$ for all $j\geq3$. However, if ${\cal{S}}$ has size at least $3$, then we would have $A_3\cap B_3\not=\emptyset$, a contradiction. So, ${\cal{S}}$ has size $2$, and 
$$\Sigma({\cal{S}}) = f(A_1, B_1) + f(A_2, B_2) \leq \frac{1}{3}+\frac{1}{3}=\frac{2}{3}<\frac{5}{6}.$$
			
			\emph{Subcase 2.2:} These two sets are $A_i$ and $B_i$ for some $i\in I$.	
Let $A_1=\{x\}$ and $B_1=\{y\}$, where $x\not=y$. Then, for any $j\geq2$, we have $x\in B_j$ and $y\in A_j$. By an argument very similar to Case 1.1, we obtain a contradiction.
			
			\emph{Subcase 2.3:} These two sets are $A_i$ and $A_j$ where $i \neq j$.
Let $A_1=\{x\}$ and $A_2=\{y\}$. By an assumption in the statement of the theorem, $B_1\cap B_2=\emptyset$. We will have some cases based on the size of $B_1$ and $B_2$.
				
{\em Subcase 2.3.1:}	  $|B_1|=|B_2|=2$.	We have $B_1=\{y,u\}$ and $B_2=\{x,z\}$, where these four vertices are all distinct. Furthermore, for any $i\geq3$ we have $x,y\in B_i$ and $u,z\in A_i$, implying that $|A_i|,|B_i|\geq3$. Let $J=I-\{1,2\}$. By Lemmas \ref{avg} and \ref{1/3} we have
					\begin{align*}
					\Sigma({\cal{S}}) &=f(A_1, B_1) + f(A_2, B_2)+\Sigma({\cal{S}}[J])\leq \frac{2}{3}+\frac{3}{10}\Sigma({\cal{S}}[J]-\{x,u\}) \\
					&\leq \frac{2}{3}+\frac{3}{10}\cdot \frac{1}{3}\Sigma({\cal{S}}[J]-\{x,y,u,z\}) \leq\frac{2}{3}+\frac{1}{10}=\frac{23}{30}<\frac{5}{6}.
					\end{align*}
					
{\em Subcase 2.3.2:}	 				
  $|B_1|=2$ and $|B_2|\geq3$.
					Let $B_1=\{y,u\}$. Recall that $B_1$ and $B_2$ are disjoint. Now, for every $i\geq3$ we have $x,y\in B_i$ and $u\in A_i$. By Lemma \ref{1/5}, for $J=I-\{1,2\}$ we have 
					\[\Sigma({\cal{S}}) = f(A_1, B_1) + f(A_2, B_2)+\Sigma({\cal{S}}[J])\leq \frac{1}{3}+\frac{1}{4}+\frac{1}{5}\Sigma({\cal{S}}[J]-\{x,y,u\})\leq\frac{7}{12}+\frac{1}{5}=\frac{47}{60}<\frac{5}{6}. \]

{\em Subcase 2.3.3:}	 	 $|B_1|,|B_2|\geq3$.
If there exists a $B_i$ of size 2, say $|B_3|=2$, then we have $B_3=\{x,y\}$, which implies $I=[3]$. So, we will have 
$$\Sigma({\cal{S}})= f(A_1, B_1) + f(A_2, B_2) + f(A_3, B_3) \leq\frac{1}{4}+\frac{1}{4}+\frac{1}{6}=\frac{2}{3}<\frac{5}{6}.$$

{\em Subcase 2.3.4:}	  $|B_i|\geq3$, for all $i\in I$. By Lemma \ref{avg} we know there exists some $v\in V({\cal{S}})$ such that $\Sigma({\cal{S}}[I_{\bar{v}}^A]-\{v\})>\frac{5}{6}$. However, ${\cal{S}}[I_{\bar{v}}^A]-\{v\}$ has at most two sets of size $1$
(namely, $A_1$ and $A_2$) because all the sets $B_i$ have size at least $3$. Since  $B_1\cap B_2=\emptyset$, this contradicts the minimality of ${\cal{S}}$.
			
\bigskip
\noindent{\bf  Case 3:} There are at least three sets of size 1.
Repeating the argument of Cases 2.1 and 2.2, we can assume all sets of size 1 in ${\cal{S}}$ are $A_i$s. Now, let $A_i=\{x_i\}$ for $1\leq i\leq3$. Note that $x_3\in B_1,B_2$, which means $B_1$ and $B_2$ are not disjoint, contradicting the assumption in the statement.

\bigskip	
\noindent{\bf  Case 4:} There are no sets of size 1.
We have $a_i,b_i\geq2$ for all $i\in I$. By Lemma \ref{avg}, for some $u,v\in V({\cal{S}})$, we have $\Sigma({\cal{S}}[I_{\bar{v}}^A]-\{v\}),\Sigma({\cal{S}}[I_{\bar{u}}^B]-\{u\})>\frac{5}{6}$. By the minimality of ${\cal{S}}$,  any such ${\cal{S}}[I_{\bar{v}}^A]-\{v\}$ and ${\cal{S}}[I_{\bar{u}}^B]-\{u\}$ both have at least two sets of size $1$. Without loss of generality, we may assume $I_{\bar{u}}^B = \{1, 2, \dots, m'\}$ for some $m' < m$. Let $A'_i := A_i   -  \{u\}$ for all $i \in [m']$. 
Since $|B_i|\geq 2$ for all $i$,
again by the minimality of ${\cal{S}}$, there are $1\leq i<j\leq m'$ such that $|A'_i|=|A'_j|=1$ and $B_i\cap B_j\not=\emptyset$. Say $|A'_1|=|A'_2|=1$. We consider cases based on $|B_1|, |B_2|$.

    \emph{Subcase 4.1:} $|B_1|=|B_2|=2$.
    Since $|A'_1|=|A'_2|=1$,  we have $|A_1|=|A_2|=2$ with $A_1 \cap A_2 = \{u\}$. We also have $B_1\cap B_2\neq \emptyset$, so Lemma \ref{22-Diamond} gives that $\Sigma({\cal{S}})\leq \frac{5}{6}$.
    
    \emph{Subcase 4.2:} $|B_1|=2, |B_2|\geq 3$.
    We have $A'_1 = \{x\}$, $A'_2 = \{y\}$, and $B_1 = \{y,z\}$. For every $3 \leq i \leq m'$, we have $x,y \in B_i$ and $z \in A'_i$. Let $J = \{3, \dots, m'\}$ and notice that ${\cal{S}}[J] - \{x,y,z\}$ is 1-cross intersecting. Applying Lemma \ref{1/5}, we have 
    \[\Sigma({\cal{S}}[I_{\bar{u}}^B]-\{u\}) = f(A'_1, B_1) + f(A'_2, B_2) + \Sigma({\cal{S}}[J]) \leq \frac{1}{3} + \frac{1}{4} + \frac{1}{5}\Sigma({\cal{S}}[J]-\{x,y,z\}) \leq \frac{7}{12} + \frac{1}{5} < \frac{5}{6}. \]
    
    \emph{Subcase 4.3:} $|B_1|, |B_2| \geq 3$.
    Observe that we must also be in the corresponding case for ${\cal{S}}[I_{\bar{v}}^A]-\{v\}$. Let $B^*_i = B_i   -  \{v\}$ for all $i \in I_{\bar{v}}^A$. Then we have $|B^*_i|=|B^*_j| = 1$, $A_i \cap A_j \neq \emptyset$, and $|A_i|, |A_j| \geq 3$ for some $i, j \in I_{\bar{v}}^A$. Recall that $|B_1|, |B_2| \geq 3$, so $i,j$ cannot be 1 or 2. Thus we may assume without loss of generality that 
    $i =3$, $j=4$. Then in ${\cal{S}}$, we have $|A_1| = |B_3| = 2$ and $|B_1|, |A_3| \geq 3$, so we can repeat and develop the corresponding case in Holzman's proof~\cite{Holzman}. 
    
    To match notation, we let $A_1 = \{x,y\}$ and $B_3 = \{x,z\}$, noting that $u = x$ or $y$ and $v = x$ or $z$. We partition $I-\{1,3\}$ into $I_1, I_2, I_3$ as follows:
    \begin{center}
        $I_1=\{i\in I\ :\ x\in A_i, y\in B_i, z\not\in A_i\}$\\
        $I_2=\{i\in I\ : \ x\in B_i, y\not\in B_i, z\in A_i\}$\\
        $I_3=\{i\in I\ : \ x\not\in A_i\cup B_i, y\in B_i, z\in A_i\}$.
    \end{center}
    
    Note that $$\Sigma({\cal{S}}) = f(A_1, B_1) + f(A_3, B_3) + \Sigma({\cal{S}}[I_1]) + \Sigma({\cal{S}}[I_2 \cup I_3]).$$ We have $f(A_1, B_1), f(A_3, B_3) \leq \frac{1}{10}$ and by Lemma \ref{1/3}, $\Sigma({\cal{S}}[I_1]), \Sigma({\cal{S}}[I_2 \cup I_3]) \leq \frac{1}{3}$. Notice that if either ${\cal{S}}[I_1]$ or ${\cal{S}}[I_2 \cup I_3]$ contains no pair of size (2,2), then by Lemma \ref{1/3} we have 
    $$\Sigma({\cal{S}}) \leq \frac{1}{10} + \frac{1}{10} + \frac{3}{10} + \frac{1}{3} = \frac{5}{6}.$$ Thus we may assume ${\cal{S}}[I_1], {\cal{S}}[I_2 \cup I_3]$ each contain at least one pair of size (2,2).
    
    Suppose $|A_i| = |A_j| = |B_i| = |B_j| = 2$ for some $i, j \in I_1$. Notice that since we have $x \in A_i, A_j$ and $y \in B_i, B_j$, Lemma \ref{22-Diamond} gives that $\Sigma({\cal{S}}) \leq \frac{5}{6}$.
    
    Thus we have exactly one such pair in $I_1$, say $(A_i, B_i)$. Consider ${\cal{S}}[I_1] - \{x,y\}$, and notice that $|A_i  - \{x\}| = |B_i   -  \{y\}| = 1$. Then we have 
    \[ \Sigma({\cal{S}}[I_1]-\{x,y\}) = f(A_i  - \{x\}, B_i   -  \{y\}) + \sum_{j \in I_1-\{i\}} f(A_j  - \{x\}, B_j   -  \{y\}) \leq 1.\]
    Since $f(A_i  - \{x\}, B_i   -  \{y\}) = \frac{1}{2}$, this gives $\sum_{j \in I_1-\{i\}} f(A_j  - \{x\}, B_j   -  \{y\}) \leq \frac{1}{2}.$ Then using Lemma \ref{1/3} gives 
    \[ \Sigma({\cal{S}}[I_1]) = f(A_i, B_i) + \sum_{j \in I_1-\{i\}} f(A_j, B_j) \leq \frac{1}{3} \cdot \frac{1}{2} +  \frac{3}{10} \cdot \frac{1}{2} = \frac{19}{60}. \]
    
    If we also have exactly one pair of size (2,2) in $I_2 \cup I_3$, then we apply a similar argument to above to obtain 
    $$\Sigma({\cal{S}}) \leq \frac{1}{10} + \frac{1}{10} + \frac{19}{60} + \frac{19}{60} = \frac{5}{6}.$$ Thus we have at least two such pairs in $I_2 \cup I_3$.
    
    \begin{figure}[h]
		\centering
		\begin{tikzpicture}[scale=1]
		
		    \draw (1,0) node[fill=white, label=below:{\textcolor{red}{\bm{$A_k$}}}] {};
		    \draw (-.25,1) node[fill=white, label=left:{\textcolor{red}{\bm{$A_j$}}}] {};
		    \draw (.25,2.5) node[fill=white, label=above left:{\textcolor{red}{\bm{$A_1$}}}] {};
		    \draw (1.75,2.5) node[fill=white, label=above right:{\textcolor{red}{\bm{$A_i$}}}] {};
		    \draw (2.5,1) node[fill=white, label=right:{\textcolor{red}{\bm{$A_3$}}}] {};
		    
			\draw[color=red, thick] (2,0)--(0,0)--(-.5,2)--(1,3)--(2.5,2);
			\draw[color=blue, thick] (0,0)--(1,3)--(2,0)--(-.5,2)--(2.5,2);
			\draw[color=red, thick]  (2.25,1) circle [x radius=1.2cm, y radius=3mm, rotate=78];
			\draw[color=blue, thick]  (1.25,1) circle [x radius=1.75cm, y radius=3mm, rotate=39];

			\draw[color=black] (0,0) node[draw,shape=circle,fill=black,scale=0.35,label=below left:{\bm{$z$}}] {};
			\draw[color=black] (-0.5,2) node[draw,shape=circle,fill=black,scale=0.35,label=left:{\bm{$y$}}] {};
			\draw[color=black] (1,3) node[draw,shape=circle,fill=black,scale=0.35,label=above:{\bm{$x$}}] {};
			\draw[color=black] (2.5,2) node[draw,shape=circle,fill=black,scale=0.35,label=right:{\bm{$a$}}] {};
			\draw[color=black] (2,0) node[draw,shape=circle,fill=black,scale=0.35,label=below right:{\bm{$b$}}] {};

		\end{tikzpicture}
		\caption{The subsystem $S[\{1,3,i,j,k\}]$.}
	\end{figure}
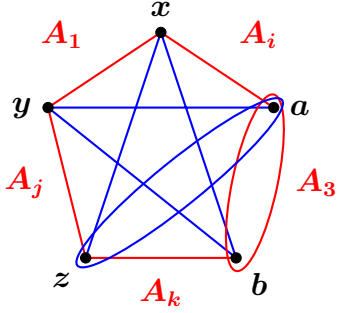

    If either $I_2$ or $I_3$ contains two such pairs, then we can apply Lemma \ref{22-Diamond} to get $\Sigma({\cal{S}}) \leq \frac{5}{6}$. The remaining possibility is that we have exactly one (2,2) pair in each of $I_1$, $I_2$, and $I_3$. We call these pairs $(A_i, B_i)$, $(A_j, B_j)$, and $(A_k, B_k)$, respectively. Recall that $A_1 = \{x,y\}$ and $B_3 = \{x, z\}$. By the definition of $I_1$, we have $A_i = \{x, a\}$ and $B_i = \{y, b\}$ where $a, b, x, y$ and $z$ are all distinct. Notice that using the definition and the fact that $A_k$ and $B_k$ must respectively intersect $B_i$ and $A_i$, we get $A_k = \{z, b\}$ and $B_k = \{y , a\}$. Then in order for $A_j$ to intersect $B_i$ and $B_j$ to intersect $A_k$, we must have $A_j = \{z, y\}$ and $B_j = \{x, b\}$. Finally, we need $a, z \in B_1$ and $a, b \in A_3$ for ${\cal{S}}$
    to be 1-cross intersecting.

    Finally, consider $A_2$. We know $|A_2| = 2$ and $|B_2| \geq 3$, so $i,j,k \neq 2$. However, there is no way for $A_2$ to intersect each of $B_1, B_3, B_i, B_j, B_k$ in exactly one vertex. Thus this case cannot occur, finishing the proof 
    of Theorem~\ref{kmn}.


\end{document}